
\documentclass[pamqarxiv,keywordsasfootnote]{ipartarxiv}

\RequirePackage{amsthm,amssymb}
\RequirePackage[leqno]{amsmath}
\RequirePackage{hyperref}

\startlocaldefs
\newtheorem{thm}{Theorem}

\endlocaldefs


\newtheorem{remark}{Remark}

\newcommand{\veps}{\varepsilon}

\def\tr{\textmd{tr}}

\def\Ric{\textmd{Ric}}

\def\L{\mathcal{L}}

\def\R{\mathbb{R}}

\def\R{\mathbb{R}}

\def\S{\Sigma}
\def\lf({\left(}
\def\ri){\right)}

\def\={\stackrel{(n=2)}{=}}
\def\p{\partial}

\def\H{\mathcal{H}}

\newcommand{\be}{\begin{equation}}
\newcommand{\ee}{\end{equation}}
\newcommand{\bee}{\begin{equation*}}
\newcommand{\eee}{\end{equation*}}

\newcommand{\m}{\mathfrak{m}}

\newcounter{mnotecount}[section]

\setcounter{equation}{0}

\allowdisplaybreaks


\begin{document}

\begin{frontmatter}
	\title[]{On the evolution of the spacetime Bartnik mass}
	\begin{aug}
		\author{\fnms{Stephen} \snm{McCormick}\ead[label=e1]{stephen.mccormick@math.uu.se}}
		\address{Matematiska institutionen \\ Uppsala universitet, 751 06 Uppsala \\
			Sweden\\
			\printead{e1}}
		\and
		\author{\fnms{Pengzi} \snm{Miao}\thanksref{t3}\ead[label=e2]{pengzim@math.miami.edu}}
		\address{Department of Mathematics \\
			University of Miami, Coral Gables, FL 33146\\
			USA\\
			\printead{e2}}            
		\thankstext{t3}{Research partially supported by Simons Foundation 
		Collaboration Grant for Mathematicians \#585168.}
	\end{aug}


\begin{abstract}
It is conjectured that the full (spacetime) Bartnik mass of a surface $\Sigma$ is realised as the ADM mass of some stationary asymptotically flat manifold with boundary data prescribed by $\Sigma$. Assuming this holds true for a 
$1$-parameter family of surfaces $\Sigma_t$ evolving in an initial data set {with the dominant energy condition}, we compute an expression for the derivative of the Bartnik mass along these surfaces. 
An immediate consequence of  this formula is that the Bartnik mass of $ \Sigma_t$ is monotone non-decreasing 
whenever $ \Sigma_t$ flows outward. 

It is our pleasure to dedicate this paper to Robert Bartnik on the occasion of his $60$th birthday.
\end{abstract}

\begin{keyword}[class=AMS]
\kwd[Primary ]{83C99}
\kwd{83C40}
\kwd[; secondary ]{83-06}
\end{keyword}

\begin{keyword}
\kwd{Quasi-local mass}
\kwd{Initial data}
\end{keyword}

\end{frontmatter}
 
\section{Introduction}
The problem of quasi-local mass in general relativity is the problem of determining a reasonable notion of mass 
associated to a closed spacelike $2$-surface. 
Over the years, there have been many candidates for a suitable quasi-local mass, and while each of them has physical motivations, they do not all agree in general. One important
definition is that due to Robert Bartnik \cite{Bartnik-89}, which inspired by the notion of electrostatic capacity, is defined as 
the infimum of the ADM mass of suitable asymptotically flat extensions of the surface.

A Riemannian $3$-manifold $(M,g)$ is said to be asymptotically flat (with one end) if, after excising a compact set, it is diffeomorphic to $\R^3$ minus a closed ball with appropriate decay on the metric. The standard decay conditions are that the metric approaches the flat metric near infinity at a rate of $|x|^{-1/2-\veps}$, with its first two derivatives each decaying one power of $|x|$ faster, and that the scalar curvature be integrable. Asymptotically flat initial data for the Einstein equations is then a triple $(M,g,K)$ where $K_{ij}$ is a symmetric tensor decaying at a rate of $|x|^{-3/2-\veps}$ with its first derivative decaying one power of $|x|$ faster.

Under such conditions, it is well-known \cite{Bartnik-86,Chrusciel-86} that the ADM energy and momentum are well-defined, 
and indeed the energy is independent of coordinates while the linear momentum transforms appropriately as a vector in $\R^3$, under changes of coordinates.
 In rectangular coordinates near infinity, the ADM energy can be computed using the standard expression \cite{ADM}:
\bee 
	E_{ADM}=\frac{1}{16\pi}\lim_{R\to\infty}\int_{S_R} \p_i g_{ij}-\p_j g_{ii}\, dS^j,
\eee
while the linear momentum is computed as
\bee 
	p_i=\frac{1}{8\pi}
	\lim_{R \to\infty}\int_{S_R} 
	\lf(K_{ij}-g_{ij}\tr_g K\ri) dS^j,
\eee 
where $S_R:=\{|x|=R \}$ denotes a large coordinate sphere. The spacetime positive mass theorem 
\cite{Schoen-Yau81, Witten81} then says $E^2\geq p_ip^i$, and 
the total ADM mass is defined by $\m_{ADM}=\sqrt{E^2-p_i p^i}$.
The Bartnik mass of a closed two-surface $\S$ bounding a domain $\Omega$ in an initial data set, is then taken to be
\bee 
	\m_B(\S)=\inf\{ \m_{ADM}(M,g,K):(M,g,K) \text{ is an }\textit{admissible extension } \text{of } \S \}.
\eee 
Here an {\em admissible extension} refers to an asymptotically flat initial data set that extends $\Omega$ in an appropriate way, satisfying the positive mass theorem. 
Bartnik conjectured that the above infimum is realised by initial data corresponding to a stationary vacuum solution; that is, vacuum Killing initial data (KID). 
In order to better explain what constitutes an admissible extension and KID, 
we must first introduce the Einstein constraint equations. We therefore reserve discussion of this until the following section.

It should be remarked that a significant portion of the literature to date focusses only on the time-symmetric case. In which case, one expects that the infimum is realised by a static metric; this is the crux of the static metric extension conjecture. However, here we would like to consider the full spacetime definition of Bartnik mass.

A more recent definition of quasi-local mass that possesses significant promise
is that due to M.-T. Wang and S.-T. Yau \cite{WangYau-PRL, WangYau-09}, which is based on a Hamiltonian analysis. 
An intriguing question is whether there exists a relationship between the Bartnik mass 
and the Wang-Yau mass.
In \cite{CWWY}, the Wang-Yau mass with reference to static spaces 
was introduced by P.-N Chen, M.-T. Wang, Y.-K. Wang, and S.-T. Yau. 
In the time-symmetric setting, recent work by S. Lu and the second-named author \cite{LM} 
 indicates that, if the static metric extension conjecture holds, the derivative of the Bartnik mass along an evolving family of surfaces agrees with the derivative of the Wang-Yau mass with reference to the static metric extension of the given surface.
In making this observation,  the derivative formula of the Bartnik mass in time-symmetric initial data (see \cite{Miao-ICCM})
plays a key role.

In this article, we present a computational formula for the derivative of the full spacetime Bartnik mass, under the assumption that the Bartnik mass is achieved and is differentiable.
	The main result is the following.

 \begin{thm}\label{thm-intro-main}
 	Let $(M,g,K)$ be an initial data set for the Einstein equations. 
	Let $\{ \S_t \}$ be a family of closed, embedded surfaces evolving in  $M$. 
	We assume the evolution is given in terms of a smooth map  $X:\S \times I \to M$ by
 	\be
 	\frac{d X}{dt}=\eta n .
 	\ee 
	Here $I $ is an interval, $n$ is the unit normal pointing towards infinity in $M$, and 
 	$\eta $ denotes the  speed of $\Sigma_t = X ( \Sigma, t)$. 
 	
 	Suppose that for each $\S_t$ there exists an admissible extension (in the sense of Section \ref{SSetup}) $(M_t,g_t,K_t)$ realizing the Bartnik mass of $\S_t$ that is stationary and vacuum. Moreover, suppose $\{ (M_t, g_t, K_t) \}_{t \in I} $  depends smoothly on $t$. Denote by $N_t,X_t^A,X_t^\nu$ the projections of the stationary Killing field orthogonal to the initial data slice, tangential to $\S_t$, and orthogonal to $\S_t$ in $M_t$, respectively.
 	
 	Then the evolution of the Bartnik mass is given by
 	\begin{equation} \label{eq-intro-evoformfull}
 	\begin{split}		\frac{d}{dt}\m_B(\S_t)=&\, \frac{1}{16\pi}\int_{\S_t}\eta N_t\lf( |\Pi_t^{(M)}-\Pi_t^{(S)}|^2+|K_{t\,\S}^{(M)}-K_{t\,\S}^{(S)}|^2 \ri)\, d\mu_t \\
 	&+\frac{1}{8 \pi}\int_{\S_t} \eta X_t^{\nu}\lf(K_{t\,\S}^{(M)}-K_{t\,\S}^{(S)} \ri)\cdot \lf(\Pi_t^{(M)}-\Pi_t^{(S)} \ri)\,  d\mu_t \\
 	&+\int_{\S_t}\eta\lf( N_t\rho+X_t^A J_A+X_t^\nu J_n\ri)\,d\mu_t,
	\end{split}
 	\end{equation}
where the superscripts $(S)$ and $(M)$ refer to quantities defined on the stationary extension $M_t$ and on the original manifold $M$ respectively; $\Pi_t$ is the second fundamental form of $\S_t$ in $M_t$; $d\mu_t$ is the volume form of $\S_t$; a subscript ${\S}$ refers to the restriction to $\S$; and $(\rho,J_A,J_n)$ is the energy-momentum covector corresponding to $(M,g,K)$, projected tangentially ($J_A$) and orthogonally ($J_n$) to $\S$.
 \end{thm}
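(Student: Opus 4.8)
The plan is to reduce the problem to a first variation of the ADM energy--momentum of the stationary extensions, and then to use the Killing initial data (KID) structure to convert an interior integral into boundary integrals. Since each $(M_t,g_t,K_t)$ realizes the Bartnik mass, $\m_B(\S_t)=\m_{ADM}(M_t,g_t,K_t)$, so it suffices to differentiate the right-hand side. Using that $\{(M_t,g_t,K_t)\}$ depends smoothly on $t$, I would fix a smooth family of diffeomorphisms identifying the $M_t$ with a single manifold-with-boundary $(M_0,\S)$; pulling back, one obtains a smooth one-parameter family of asymptotically flat, vacuum initial data $(\bar g_t,\bar K_t)$ on $M_0$ whose inner boundary data on $\S$ equals the spacetime Bartnik data (induced metric, mean curvature, and the relevant components of $K$) of $\S_t$ as computed in $(M,g,K)$, together with a family of lapse--shift pairs $(\bar N_t,\bar X_t)$ --- the projections of the stationary Killing field --- solving the KID (adjoint-constraint) equation $D\Phi^{*}_{(\bar g_t,\bar K_t)}(\bar N_t,\bar X_t)=0$, where $\Phi$ denotes the constraint map. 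Nothing here uses minimality beyond the identity $\m_B(\S_t)=\m_{ADM}(M_t,g_t,K_t)$; the rest is a statement about a smoothly varying family of stationary vacuum extensions of $\S_t$.

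\textbf{The KID identity.} The central tool is the integration-by-parts formula for the linearized constraint operator and its formal adjoint: for the variation $(\dot{\bar g}_t,\dot{\bar K}_t)$ one has
\be
\int_{M_0}\Big(\la(\bar N_t,\bar X_t),\,D\Phi_{(\bar g_t,\bar K_t)}(\dot{\bar g}_t,\dot{\bar K}_t)\ra-\la D\Phi^{*}_{(\bar g_t,\bar K_t)}(\bar N_t,\bar X_t),\,(\dot{\bar g}_t,\dot{\bar K}_t)\ra\Big)\,dV=\oint_{S_\infty}\mathcal B-\oint_{\S}\mathcal B,
\ee
with $\mathcal B=\mathcal B(\bar N_t,\bar X_t;\dot{\bar g}_t,\dot{\bar K}_t)$ the usual bilinear boundary integrand. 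The first bulk term vanishes because the extension is vacuum for all $t$, so $D\Phi(\dot{\bar g}_t,\dot{\bar K}_t)=\tfrac{d}{dt}\Phi(\bar g_t,\bar K_t)=0$; the second vanishes by the KID equation. Hence $\oint_{S_\infty}\mathcal B=\oint_{\S}\mathcal B$.

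\textbf{The term at infinity.} By the standard expressions for $E_{ADM}$ and $p_i$, the integrand $\mathcal B$ over $S_\infty$, contracted with $(\bar N_t,\bar X_t)$ --- which is asymptotic to a (possibly boosted) constant time translation --- reproduces $16\pi$ times the variation of the ADM energy--momentum of $(M_t,g_t,K_t)$ paired with the asymptotic Killing vector. For a stationary asymptotically flat solution this pairing equals $\m_{ADM}$ after the standard normalization of the Killing field: using $\delta(E^2-|p|^2)=2(E\,\delta E-p_i\,\delta p_i)$ together with the relation between the asymptotic Killing field and the ADM $4$-momentum of a stationary spacetime, one gets $\oint_{S_\infty}\mathcal B=16\pi\,\tfrac{d}{dt}\m_B(\S_t)$.

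\textbf{The term at $\S$ and the main obstacle.} It remains to show $\oint_{\S}\mathcal B$ equals $16\pi$ times the right-hand side of \eqref{eq-intro-evoformfull}. I would: (i) write $\mathcal B|_{\S}$ in terms of the induced metric, the normal derivatives of $(\bar g_t,\bar K_t)$, and the boundary jets of $(\bar N_t,\bar X_t)$; (ii) use the KID equations on $\S$ to express those jets of $(\bar N_t,\bar X_t)$ through $N_t,X_t^A,X_t^\nu$, the second fundamental form $\Pi_t^{(S)}$, and $K_{t\,\S}^{(S)}$; (iii) split $(\dot{\bar g}_t,\dot{\bar K}_t)|_{\S}$ into the part carried by the evolution of the Bartnik data of $\S_t$ in $(M,g,K)$ --- governed by the flow equations $\p_t\gamma=2\eta\,\Pi_t^{(M)}$, $\p_t H=-\Lap_\S\eta-(|\Pi_t^{(M)}|^2+\Ric^{(M)}(n,n))\eta$ and their $K$-analogues, with the curvature terms rewritten via Gauss--Codazzi and the Hamiltonian and momentum constraints of $(M,g,K)$ to introduce $\rho,J_A,J_n$ --- and the remaining ``free'' components, which enter only through $\Pi_t^{(M)}-\Pi_t^{(S)}$ and $K_{t\,\S}^{(M)}-K_{t\,\S}^{(S)}$ since the matched Bartnik data force the trace-type parts to cancel; (iv) collect terms and complete the squares. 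The substantive difficulty is precisely this last computation: the bookkeeping that isolates the squared differences $|\Pi_t^{(M)}-\Pi_t^{(S)}|^2+|K_{t\,\S}^{(M)}-K_{t\,\S}^{(S)}|^2$, the cross term weighted by $X_t^\nu$, and the matter flux $N_t\rho+X_t^AJ_A+X_t^\nu J_n$, and the verification that every contribution depending on the choice of identifications in the first step cancels --- which it must, since all quantities in \eqref{eq-intro-evoformfull} are geometric.
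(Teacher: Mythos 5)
Your overall strategy coincides with the paper's: identify $\m_B(\S_t)$ with the value of the Regge--Teitelboim Hamiltonian $\H(g_t,K_t;\xi_t)=16\pi\,\xi_{t\,\infty}\cdot\mathbb{P}(g_t,K_t)$ on the stationary extension, differentiate in $t$ on a fixed background manifold-with-boundary, kill the bulk term using the KID equation $D\Phi^*_{(g_t,K_t)}[\xi_t]=0$ together with the vacuum condition, absorb the surface integral at infinity into the variation of the ADM mass (with the Beig--Chru\'sciel normalization of $\xi_\infty$ parallel to $\mathbb{P}$), and reduce everything to a boundary integral over $\S_t$. Up to and including that reduction your argument is sound and is exactly what Sections \ref{Svariation} and \ref{SEvoFormula} of the paper do.

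The genuine gap is that the entire content of the theorem lives in your steps (iii)--(iv), which you assert rather than perform. That the boundary integrand organizes itself into $|\Pi^{(M)}-\Pi^{(S)}|^2+|K^{(M)}_\S-K^{(S)}_\S|^2$, the cross term weighted by $X^\nu$, and the flux $N\rho+X^AJ_A+X^\nu J_n$ is precisely what must be proved; it is not a formal consequence of gauge invariance or of ``completing the square,'' since a priori the boundary term contains many contributions that are not differences of $(M)$ and $(S)$ quantities. Concretely, the paper needs: (a) the KID equation $2NK_{ij}+\L_Xg_{ij}=0$ to trade terms such as $N\,K^{(S)}_\S\cdot K^{(M)}_\S$ and $N\tr_\S K$ for derivatives of $X$ (equations \eqref{eq-Statswitch1}--\eqref{eq-Statswitch2}), a maneuver your outline does not anticipate; (b) the Gauss equation on both $M$ and $M_t$ to eliminate $\Ric^{(M)}_{nn}-\Ric^{(S)}_{\nu\nu}$; (c) the Hamiltonian constraint on both manifolds, one producing $\rho$ and the other converting $R^{(S)}$ into quadratic terms in $K^{(S)}$; (d) an explicit computation, in coordinates $g=\eta^2dt^2+g_{AB}dx^Adx^B$ adapted to the flow, of $\p_t\omega^\perp_A$ and $\p_t\tr_\S K$ --- your ``$K$-analogues'' of the mean-curvature evolution, which you do not write down and which are not off-the-shelf citations; and (e) the momentum constraint on both manifolds plus several integrations by parts over $\S_t$ to assemble $J_A$ and $J_n$, with the leftover terms cancelling only after using $H^{(M)}=H^{(S)}$ and $\omega^\perp_A=K^{(M)}_{nA}=K^{(S)}_{\nu A}$. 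Without carrying out (a)--(e) one cannot conclude that the residue is exactly the claimed quadratic form --- in particular the presence and sign of the $X^\nu$ cross term only emerge at the end of this computation. So the proposal is a correct roadmap of the paper's proof, but the decisive computation is missing.
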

	Under the key assumptions imposed in Theorem \ref{thm-intro-main}, one sees that an immediate consequence of formula \eqref{eq-intro-evoformfull} is that, if $(M, g, K$) satisfies the dominant energy condition, then
	$ \displaystyle \frac{d}{dt}\m_B(\S_t) \ge 0 $
	along any $\{ \Sigma_t \} $ that flows outward.
	We now give a few remarks concerning the main assumptions. 
	
	\begin{remark}
		Existence and uniqueness of a stationary vacuum extension realizing the Bartnik mass is a fundamental question that has remained open 
		since the definition was proposed in \cite{Bartnik-89}.  In the time-symmetric setting, recent progress on static vacuum extensions 
		has been made by M. Anderson and  M. Khuri \cite{A-K}, and by M. Anderson and J. Jauregui \cite{A-J}. 
		In particular, examples of boundary surfaces with zero Bartnik mass that do not admit a mass minimizer have been constructed  by M. Anderson and J. Jauregui \cite{A-J}. 
	\end{remark}
	
	\begin{remark}
		The continuity and differentiability of the Bartnik mass is another challenging question that remains to be rigorously analyzed.  
		In the time-symmetric setting, partial results on the continuity of the Barntnik mass has been given by the first-named author \cite{McCormick-18}.
		In the general case, existence and smooth dependence of stationary vacuum extensions of boundary data  close to a round sphere in the 
		Minkowski spacetime $\R^{3,1}$ has been recently obtained by Z. An \cite{An-18}. 
		
		In the derivation of   \eqref{eq-intro-evoformfull}, as the proof in Section \ref{SEvoFormula} shows, one only needs the $1$-parameter family  $ \{ (g_t, K_t) \}_{t \in I} $  to be differentiable in an appropriate space of initial data (on a fixed manifold with boundary $\Sigma$) so that one can differentiate the Hamiltonian along this curve, for example in an appropriate weighted Sobolev or H\"older space
	\end{remark}

\begin{remark}
Formula \eqref{eq-intro-evoformfull}  was first found by Robert Bartnik and the second-named author in 2007 (see \cite[Section 3]{Miao-ICCM}). Unfortunately the drafts referred to in \cite{Miao-ICCM} were never completed. It is our pleasure to present this formula here, and dedicate it  to Robert on the occasion of his 60th birthday.
\end{remark}

This article is organized as follows: In Section \ref{SSetup}, we recall the Einstein constraint equations and some other basic definitions. In Section \ref{Svariation}, we compute the first variation of the Reggie-Teitelboim Hamiltonian and give this expression in a form promoting Bartnik's geometric boundary data. Then in Section \ref{SEvoFormula} we use the computations in Section \ref{Svariation} to derive the evolution formula \eqref{eq-intro-evoformfull}.

\section{Setup}\label{SSetup}
Let $(M,g)$ be a Riemannian $3$-manifold and $K_{ij}$ be a symmetric $2$-tensor on $M$. The constraint map $\Phi$ is given by
\begin{align} \begin{split}
\Phi_0(g,K):&=R(g)+(\tr_gK)^2-|K|^2\\
\Phi_i(g,K):&=2( \nabla^jK_{ij}-\nabla_i(\tr_gK)).
\end{split}
\end{align}
This allows one to write the Einstein constraint equations simply as
\begin{align*}
	\Phi_0(g,K)&=16\pi \rho\\
	\Phi_i(g,K)&=16\pi J_i,
\end{align*}
where $\rho$ and $J_i$ correspond to appropriate projections of a source energy-momentum tensor from the spacetime 4-manifold.

Naturally, the energy-momentum source terms should not be completely arbitrary; one usually imposes the dominant energy condition, which amounts to the condition $\rho^2\geq J_iJ^i$. 
This is a standard assumption under which the positive mass theorem holds. 

We now turn back to discuss the notion of admissible extensions, defining the Bartnik mass. Let $\S$ be a $2$-surface bounding some domain $\Omega$ in a given initial data set $(\widehat M,\widehat g,\widehat K)$. We would like to consider an admissible extension of $\S$ to be some initial data set $(M,g,K)$ with interior boundary 
$\p M$ isometric to $\S$, matching $\Omega$ in some sense. In particular, we would like to ask that the resultant manifold obtained by gluing $\Omega$ to $M$ along $\S$, satisfies the dominant energy condition. Of course, $g$ and $K$ are not necessarily smooth along $\S$, so the best we can ask for is that the dominant energy condition is satisfied distributionally.

Assuming momentarily that the data is smooth, 
in a neighborhood of $\S$, we can foliate $M$ by level sets of the distance function to $\S$ and 
let $H$ denote the mean curvature of each level set. It follows from the second variation of area that
\bee 
\nabla_\nu H=-R_{\nu\nu}-|\Pi|^2,
\eee	 
where $\nu$ is the unit normal to $ \S$, $R_{\nu\nu}=\Ric(\nu,\nu)$ is the Ricci tensor of $g$ and $\Pi$ is the second fundamental form of $\S$. 
By the Gauss equation, we have
\be
16\pi \rho=\Phi_0(g,K)=R(g_\S)-|\Pi|^2-H^2-2\nabla_\nu H+(\tr_gK)^2-|K|^2,
\ee 
where $R(g_\S)$ is the scalar curvature of $\S$ with the induced metric. Therefore, in order to avoid a distributional `Dirac delta' type of spike in $\rho$, one asks that the mean curvature on each side of $\S$ agree. The remaining geometric boundary conditions come from the momentum constraint. In what follows, and indeed throughout the remainder of this article, it will be useful to work in coordinates adapted to $\S$. Let $\nu$ be the unit normal to $\S\cong\p M$ pointing towards infinity and let $\{\p_A\}$ with $A=1,2$ be a frame on $\S$.

From the momentum constraint, we have
\bee 
8\pi J_\nu=\nabla^A(K_{A\nu})-\nabla_\nu(\tr_\S K);
\eee 
a tangential derivative that is bounded, and a normal derivative of $\tr_\S K$. That is, we must ask that $\tr_\S K$ matches on both sides of $\S$ to avoid a distributional spike in $J_\nu$. The momentum constraint also gives (cf. \eqref{eq-momconstexp} below)
\bee 
8\pi J_A=\nabla_\S^B K_{AB}+K_{\nu B} \Pi^{B}_A+K_{\nu A} H+\nabla_\nu K_{A\nu}-\nabla_A(\tr_\S K)-\nabla^\S_AK_{\nu\nu}.
\eee 
As above, due to the term $\nabla_\nu(K_{\nu A})$, we must ask that $\omega^\perp_A:=K_{\nu A}$ match on either side of $\S$ to avoid a distributional spike in $J_A$.
It may be noticed that the dominant energy condition can only be violated if the distributional spikes in $(\rho,J_i)$ decrease $\rho^2-J_iJ^i$. In fact, recent work of Shibuya \cite[Section VI]{Shibuya} shows that the positive mass theorem indeed still holds for such a manifold that is not smooth along $\S$ provided that the distributional spike `jumps the right way', if such a jump exists (cf. \cite{Miao02}).

This motivates us to insist that an admissible extension of $\S$ is an initial data set $(M,g,K)$ with boundary, such that on $\p M$ the quantities $(g_{\p M},H,\omega^\perp_A,\tr_{\p M} K)$ are prescribed by the corresponding quantities on $\S$ in $(\widehat M,\widehat g,\widehat K)$. These geometric boundary conditions were first proposed by Bartnik; indeed the explanation given above and some related discussion can be found in \cite{Bartnik-TsingHua}.

In particular, an admissible extension in the context of the Bartnik mass depends on the geometric boundary data $(\S,g_\S,H,\omega^\perp_A,\tr_\S K)$. An admissible extension of $\S$ (or of $(\S,g_\S,H,\omega^\perp_A,\tr_\S K)$) is an asymptotically flat initial data set $(M,g,K)$, containing no apparent horizons, whose boundary data agrees with $(\S,g_\S,H,\omega^\perp_A,\tr_\S K)$. The condition that the extension contains no apparent horizons is required in order to exclude extensions where $\S$ is hidden behind a horizon. If this were not excluded then the mass would always be zero, as we could consider extensions where $\S$ is hidden behind an arbitrarily small horizon and the mass could be made arbitrarily small. Taking this to be the definition of an admissible extension, we recall the Bartnik mass is defined to be
\begin{align*} 
\m_B(\S)&=\m_B(\S,g_\S,H,\omega^\perp_A,\tr_\S K)\\
&=\inf\{ \m_{ADM}(M,g,K):(M,g,K) \text{ is an }\textit{admissible extension } \text{of } \S \} .
\end{align*} 

Central to the computations to follow are the linearization of $\Phi$ and its formal adjoint. The linearization of $\Phi$ with respect to $(g,K)$ acts on perturbations $(h,L)$ by
\begin{align}\label{eq-constraints}\begin{split}
	D\Phi_{0\,(g,K)}[h,L]=&\,-\Delta_g(\tr_g h)+\nabla_i\nabla_j h^{ij}-h^{ij}R_{ij}+2h^{ij}K^k_iK_{kj}\\
	&+2\lf( \tr_g(K)\lf( \tr_g(L)-h_{ij}K^{ij} \ri)-L_{ij}K^{ij} \ri)\\
	D\Phi_{i\,(g,K)}[h,L]=&\,2\lf(\nabla^jL_{ij}-h^{jk}\nabla_k  K_{ij} +\nabla_i\lf(\tr_g(L)-h^{jk}K_{jk}\ri)\ri)\\
	&-K^{jk}\nabla_i h_{jk}+K_{ij}\nabla^j\tr_g(h)-2K_{ij}\nabla_k h^{jk}.
	\end{split}
\end{align}

The formal $L^2$-adjoint is then computed by pairing this with some lapse-shift ${\xi=(N,X^i)}$ and formally integrating by parts. This is directly computed as
\begin{align}\label{eq-constraintadjoint}
\begin{split}
	D\Phi^*_{1\,(g,K)}[\xi]=&\,-g^{ij}\Delta_g(N)+\nabla^i\nabla^j N-NR^{ij}+g^{ij}\nabla_k(X^lK_l^k)\\
		&+2NK^{ik} K^j_{k}-2N\tr_g(K)K^{ij}-2X^k\nabla^i K^{j}_k\\
		& +\nabla_k(X^k K^{ij})+\nabla_k (X^k)K^{ij}+2\nabla^i(X^kK_{k}^j)\\
	D\Phi^*_{2\,(g,K)}[\xi]=&\,2N(g^{ij}\tr_g(K)-K^{ij})-2\nabla^jX^i-2g^{ij}\nabla_kX^k,
\end{split}
\end{align}
where the subscripts $1$ and $2$ refer to the components of $D\Phi_{(g,K)}^*[\xi]$ that are paired with $h$ and $L$ respectively.

The Regge-Teitelboim Hamiltonian \cite{R-T} is expressed in terms of the constraint map and a fixed choice of lapse-shift, $\xi$. We fix a choice of $\xi$ that is asymptotic to a constant vector  $\xi_\infty\in\R^{3,1}$. 
We refer readers to \cite[Section 4 and 5]{Bartnik-05} for a precise explanation of the asymptotics required of $\xi$ 
in terms of weighted Sobolev spaces.

The Regge-Teitelboim Hamiltonian is then given by
\be \label{eq-RTHam}
\mathcal{H}(g,K;\xi):=16\pi \mathbb{P}(g,K)\cdot \xi_\infty-\int_M \xi\cdot \Phi(g,K)\sqrt{g},
\ee 
where $\mathbb{P} \in\R^{1,3}$ is the ADM energy momentum co-vector, $\xi_\infty\in\R^{1,3}$ is the asymptotic value of $\xi$, and $\sqrt{g}$ denotes the volume form associated to $g$.

 It is now well-known that \eqref{eq-RTHam} generates the correct equations of motion. Furthermore, results of Moncrief \cite{Moncrief} show that a vacuum spacetime is stationary if and only if, at the initial data level there exists a non-trivial element in the kernel of $D\Phi_{(g,K)}^*$. Note that by a result of Beig and Chru\'sciel \cite{BeigChrusciel} we have that if $D\Phi_{(g,K)}^*[\xi]=0$ then $\xi_\infty$ is parallel to $\mathbb{P}$;
 in particular, if we assume $|\xi_\infty|_{\R^{1,3}}=1$, we have $\xi_{\infty}\cdot\mathbb{P}(g,K)=m_{ADM}(M,g,K)$.

\section{Variation of the Hamiltonian}\label{Svariation}
Our expression for the evolution of the Bartnik mass is derived from an expression of the first variation of the Regge--Teitelboim Hamiltonian on a manifold with boundary. We therefore compute the first variation of the Hamiltonian in this section, and make some geometric interpretations of it.

Let $(M,g,K)$ be vacuum initial data; that is, $\Phi(g,K)=0$. We again fix some lapse-shift $\xi$ on $M$ that is asymptotic to a constant translation $\xi^\mu_\infty=-\frac{1}{m}\mathbb{P}^\mu(g,K)$.
In what follows, we consider \eqref{eq-RTHam} to be defined with respect to this choice of $\xi$. If we formally take the variation of \eqref{eq-RTHam} with respect $g$ and $K$, discarding boundary terms, we then obtain
\bee 
	16\pi \, Dm_{(g,K)}[h,L]-\int_M (h,L)\cdot D\Phi_{(g,K)}^*[\xi]\,\sqrt{g},
\eee 
where we have made use of the fact that $(g,K)$ is vacuum and that 
$$\xi_\infty^\mu\mathbb{P}_\mu(g,K)=m. $$ 
In general though, the boundary terms that we just discarded do not vanish; we must also consider the term

\bee 
	\int_M\lf( (h,L)\cdot D\Phi_{(g,K)}^*[\xi]- \xi\cdot D\Phi_{(g,K)}[h,L]\ri)\,\sqrt{g}.
\eee 

This expression can be divided into two sets of boundary terms; surface integrals at infinity, and surface integrals on the interior boundary $\S$. The boundary terms at infinity cancel exactly with the variation of the mass term, which is indeed motivation for the Regge-Teitelboim Hamiltonian \cite{R-T}. This cancellation is very carefully checked by Bartnik in \cite{Bartnik-05}, and the interested reader is directed there to see the details. In particular, one finds that $D\H_{(g,K;\xi)}[h,L]$ is equal to $-\int_M (h,L)\cdot D\Phi_{(g,K)}^*[\xi]\sqrt{g}$ plus some boundary terms on $\S$. 
We therefore seek a geometric meaning of these boundary terms. The boundary terms can be easily read off from the linearization of the constraint map \eqref{eq-constraints} and its adjoint \eqref{eq-constraintadjoint}, however dealing with all of these terms simultaneously quickly becomes an unwieldy mess. For this reason, we first focus only on the terms containing $N$. These terms are
\be \label{eq-Nterms0}
	\int_{\S}\lf(N(\nabla_i(\tr_g(h))-\nabla_j(h^{j}_i)+h^{j}_i\nabla_j(N)-\tr_g(h)\nabla_iN \ri) \nu^idS,
\ee 
where we take $\nu^i$ to be the unit normal pointing towards infinity. As this computation has been checked in the time-symmetric case and has been considered several times before in the literature, we omit the calculation here for brevity. We simply state that \eqref{eq-Nterms0} can be expressed as
\bee 
	\int_{\S}\nu^i\nabla_i (N)\tr_\S(h)-Nh_{AB}\Pi^{AB}-2NDH_g[h]\,dS,
\eee 
and the interested reader is directed to Proposition 3.7 of \cite{A-K}, for example, to see the computation carried out (see also Lemma 3.1 in \cite{Miao-ICCM}). Note that we again let $\{\p_A\}$ with $A=1,2$ be a frame on $\S$, and $\Pi$ denotes the second fundamental form of $\S$.

This allows us to write the variation of the Hamiltonian as
\begin{align}\begin{split}
	D&\H_{(g,K;\xi)}[h,L]=\, \int_{\S}\nu^i\nabla_i (N)\tr_\S(h)-Nh_{AB}\Pi^{AB}-2NDH_g[h]\,dS\label{eq-DHfull1}\\
	&+\int_{\S} \lf(2X^iL_i^j +X^jK^{ik}h_{ik}-2X^i K^k_ih_{k}^{j}+X^i\tr_g(h)K^j_i -2X^j\tr_g(L) \ri)\nu_j dS.
	\end{split}
\end{align}
It will be useful to split $X$ into components along $\S$ and orthogonal to $\S$, and group terms in \eqref{eq-DHfull1} according to which component of $\xi=(N,X^A,X^\nu)$ they contain. The terms containing $N$ are entirely contained in the first line of \eqref{eq-DHfull1}, so we now proceed to gather the terms containing $X^A$, which are
\begin{align}\begin{split}
	&X^A\lf(2L_A^\nu-2K_A^kh_{k}^\nu + \tr_g(h)K^\nu_A\ri)\label{eq-Aterms1}\\
	&=X^A\lf(2L_A^\nu-\omega^\perp_Ah^\nu_\nu-2K^B_A h^\nu_B+\tr_\S(h)\omega^\perp_A \ri).\end{split}
\end{align}
The terms containing $X^\nu$ are given by
\begin{align} \label{eq-nuterms1}\begin{split}
	&X^\nu\lf( 2L_{\nu\nu}+K^{ik}h_{ik}-2K_\nu^kh_{k\nu}+\tr_g(h)K_{\nu\nu}-2\tr_g(L)  \ri)\\\
	&=X^\nu\lf( -2\tr_\S(L)+K_\S\cdot h_\S+\tr_\S(h)K_{\nu\nu} \ri).\end{split}
\end{align}

Similar to the appearance of $D_gH[h]$ in the terms containing $N$, we hope to write these terms in terms of the variation of the other geometric boundary data, $\omega^\perp$ and $\tr_\S K$. We first compute
\be \label{eq-Domega1}
	D\omega^\perp_{(g,K)}[h,L]=D(K_{iA}\nu^i)_{(g,K)}[h,L]=L_{\nu A}+K_{iA}D(\nu^i)_g[h].
\ee 
The variation of the unit normal vector is computed via the key properties defining it:
\bee 
	g_{ij}\nu^i\nu^j=1 \qquad \text{and} \qquad g_{iA}\nu^i=0.
\eee 

Differentiating these conditions gives
\bee 
	h_{\nu\nu}+2g_{ij}\nu^jD(\nu^i)_g[h]=0
\eee 
and
\bee 
	h_{\nu A}+g_{\nu i} D(\nu^i)_g[h]=h_{\nu A}+g_{AB}D(\nu^B)_g[h]=0.
\eee 

From which we obtain
\bee 
	D(\nu^A)_g[h]=-h_\nu^A \qquad\text{and}\qquad D(\nu^\nu)_g[h]=-\frac12 h_{\nu\nu}.
\eee 

From \eqref{eq-Domega1}, we now have
\bee 
	D\omega^\perp_{(g,K)}[h,L]=L_{\nu A}-K_{AB}h^B_\nu-\frac12K_{A\nu}h_{\nu\nu}.
\eee 

Comparing this to \eqref{eq-Aterms1}, we find that the terms containing $X^A$ are
\bee 
X^A\lf( 2D(\omega^\perp_{A})_{(g,K)}[h,L]+\tr_\S(h)\omega^\perp_A \ri)
\eee 

We now turn to compute
\bee 
	D(\tr_\S K)[h,L]=\tr_\S L+D(g^{AB})_g[h]K_{AB}=\tr_\S L-h^{AB}K_{AB}.
\eee 
 
  We then can write the terms containing $X^\nu$ as
\bee 
	X^\nu\lf( -2D(\tr_\S K)_{(g,K)}[h,L]-K_\S\cdot h_\S +\tr_\S(h)K_{\nu\nu} \ri).
\eee

Bringing this all back together gives us the following expression for the variation of the Hamiltonian:
\begin{align}\nonumber
D&\H_{(g,K;\xi)}[h,L]=\\& 2\int_{\S}-NDH_g[h]+X^AD(\omega^\perp_A)_{(g,K)}[h,L]-X^\nu D(\tr_\S K)_{(g,K)}[h,L]\,dS \nonumber \\\begin{split}
&+\int_{\S}\nu^i\nabla_i (N)\tr_\S(h)-Nh_{AB}\Pi^{AB}+X^A\tr_\S(h)\omega^\perp_A\, dS\label{eq-DHfull}\\
&+\int_{\S}X^\nu\lf(-K_\S\cdot h_\S +\tr_\S(h)K_{\nu\nu} \ri)\end{split}\\
&-\int_M (h,L)\cdot D\Phi^*_{(g,K)}[\xi]\sqrt{g}.\nonumber
\end{align}

We now take a moment to reflect on the various terms in the expression above for the variation of the Hamiltonian. First note that the first line clearly vanishes for all perturbations preserving the geometric boundary data. The second and third lines entirely vanish when $h_{AB}$ is zero; that is, they vanish for all perturbations preserving the metric on the boundary. The fourth line is the only bulk integral, and vanishes if and only if $\xi$ is a Killing vector associated to $(g,K)$.

\section{Evolution of mass formula}\label{SEvoFormula}
We now turn to use the formula derived in the preceding section to derive our formula for the evolution of the Bartnik mass.

Let $(M,g,K)$ be some fixed initial data set and consider a $1$-parameter family of closed surfaces $ \{ \S_t \}$ 
evolving in $M$. Assume there exists an admissible vacuum stationary extension $(M_t,g^S_t,K^S_t)$ of each $\S_t$ that realizes the Bartnik mass of $\S_t$, and that this family of extensions is smooth with respect to $t$.

We assume that the Killing lapse-shift $\xi_t=(N_t,X_t)$ of each stationary extension is asymptotic to a constant translation as in the preceding sections, and scale it so that $N_t^2-|X_t|^2_{g_t}$ is asymptotic to $1$.

Below we recall the statement of Theorem \ref{thm-intro-main} and give its proof.
 
  \begin{thm}\label{thm-main}
	Let $(M,g,K)$ be an initial data set for the Einstein equations. 
	Let $\{ \S_t \}$ be a family of closed, embedded surfaces evolving in  $M$. 
	We assume the evolution is given in terms of a smooth map  $X:\S \times I \to M$ by
	\be
	\frac{d X}{dt}=\eta n .
	\ee 
	Here $I $ is an interval, $n$ is the unit normal pointing towards infinity in $M$, and 
	$\eta $ denotes the  speed of $\Sigma_t = X ( \Sigma, t)$. 
	
	Suppose that for each $\S_t$ there exists an admissible extension (in the sense of Section \ref{SSetup}) $(M_t,g_t,K_t)$ realizing the Bartnik mass of $\S_t$ that is stationary and vacuum. Moreover, suppose $\{ (M_t, g_t, K_t) \}_{t \in I} $  depends smoothly on $t$. Denote by $N_t,X_t^A,X_t^\nu$ the projections of the stationary Killing field orthogonal to the initial data slice, tangential to $\S_t$, and orthogonal to $\S_t$ in $M_t$, respectively.
	
	Then the evolution of the Bartnik mass is given by
	\begin{equation} \label{eq-evoformfull}
	\begin{split}		\frac{d}{dt}\m_B(\S_t)=&\, \frac{1}{16\pi}\int_{\S_t}\eta N_t\lf( |\Pi_t^{(M)}-\Pi_t^{(S)}|^2+|K_{t\,\S}^{(M)}-K_{t\,\S}^{(S)}|^2 \ri)\, d\mu_t \\
	&+\frac{1}{8 \pi}\int_{\S_t} \eta X_t^{\nu}\lf(K_{t\,\S}^{(M)}-K_{t\,\S}^{(S)} \ri)\cdot \lf(\Pi_t^{(M)}-\Pi_t^{(S)} \ri)\,  d\mu_t \\
	&+\int_{\S_t}\eta\lf( N_t\rho+X_t^A J_A+X_t^\nu J_n\ri)\,d\mu_t,
	\end{split}
	\end{equation}
where the superscripts $(S)$ and $(M)$ refer to quantities defined on the stationary extension $M_t$ and on the original manifold $M$ respectively; $\Pi_t$ is the second fundamental form of $\S_t$ in $M_t$; $d\mu_t$ is the volume form of $\S_t$; a subscript ${\S}$ refers to the restriction to $\S$; and $(\rho,J_A,J_n)$ is the energy-momentum covector corresponding to $(M,g,K)$, projected tangentially ($J_A$) and orthogonally ($J_n$) to $\S$ .
\end{thm}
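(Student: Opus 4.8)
The plan is to differentiate the Bartnik mass by exploiting that it is realised, for each $t$, by the ADM mass of the stationary vacuum extension $(M_t, g_t, K_t)$, and that the ADM mass of such an extension equals the Regge--Teitelboim Hamiltonian $\H(g_t, K_t; \xi_t)$ with the Killing lapse-shift $\xi_t$ normalised so that $\xi_{t,\infty} \cdot \mathbb{P}(g_t,K_t) = m_{ADM}$. Since each $(M_t,g_t,K_t)$ is vacuum, $\Phi(g_t,K_t) = 0$, and since $\xi_t$ is Killing, $D\Phi^*_{(g_t,K_t)}[\xi_t] = 0$; so the bulk integral in \eqref{eq-DHfull} drops out when we compute $\frac{d}{dt}\H$. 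Concretely, I would write $\frac{d}{dt}\m_B(\S_t) = \frac{d}{dt}\H(g_t,K_t;\xi_t)$ and apply the chain rule. A first key point is that although $\xi_t$ itself varies with $t$, the explicit $\xi$-dependence of $\H$ contributes nothing: differentiating in the $\xi$-direction produces $16\pi\, \dot\xi_\infty \cdot \mathbb{P} - \int_M \dot\xi \cdot \Phi\sqrt{g}$, and both terms vanish (the second because $\Phi = 0$; the first because $\xi_{t,\infty}$ stays parallel to $\mathbb{P}(g_t,K_t)$ with unit Lorentz norm, so its variation is $\mathbb{P}$-orthogonal by Beig--Chru\'sciel, while $\dot{\mathbb{P}}\cdot\xi_\infty$ is separately accounted for in the mass variation). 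Hence $\frac{d}{dt}\m_B(\S_t) = D\H_{(g_t,K_t;\xi_t)}[\dot g_t, \dot K_t]$, which is exactly the boundary expression \eqref{eq-DHfull} with $(h,L) = (\dot g_t, \dot K_t)$ and the bulk term deleted.

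The next step is to identify what $(\dot g_t, \dot K_t)$ is \emph{at the boundary} $\S_t = \p M_t$. Here the matching conditions enter: since $(M_t, g_t, K_t)$ is an admissible extension of $\S_t$ sitting inside $(M,g,K)$, the induced boundary data $(g_{t,\S}, H_t, \omega^\perp_{t,A}, \tr_\S K_t)$ coincide with the corresponding data of $\S_t$ as a surface in $(M,g,K)$. Differentiating this identification in $t$ tells us that the variation of each piece of geometric boundary data of the extension equals the variation of the same quantity computed intrinsically from the motion $\frac{dX}{dt} = \eta n$ inside $(M,g,K)$. Thus in \eqref{eq-DHfull} I can replace $DH_{g_t}[\dot g_t]$, $D\omega^\perp_{(g_t,K_t)}[\dot g_t,\dot K_t]$, $D(\tr_\S K)_{(g_t,K_t)}[\dot g_t,\dot K_t]$, and $\dot g_{t,\S}$ by their values for the flow in $M$. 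The variation of boundary mean curvature under a normal flow of speed $\eta$ is the classical formula $\dot H = -\Delta_\S \eta - (|\Pi^{(M)}|^2 + \Ric^{(M)}(n,n))\eta$; the variations of $\omega^\perp$, $\tr_\S K$, and $g_\S$ under the flow are similarly standard, producing expressions involving $\eta$ and $(M)$-side geometric quantities together with tangential derivatives of $\eta$. Substituting all of this, integrating the tangential-derivative terms by parts over the closed surface $\S_t$, and using the Gauss and momentum constraints \emph{on the $M$ side} (which introduce $\rho$, $J_A$, $J_n$) to rewrite the $\Ric^{(M)}(n,n)$-type terms, should collapse the whole thing into a combination of the $M$-side curvature data, the $S$-side curvature data (appearing via $\Pi^{(S)} = \Pi_t$ and $K^{(S)}_{t,\S}$, which are what the Hamiltonian variation formula naturally produces when paired against the varying metric), and the energy-momentum terms. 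The differences $\Pi^{(M)} - \Pi^{(S)}$ and $K^{(M)}_\S - K^{(S)}_\S$ arise because $h_{AB} = \dot g_{t,\S}$ is the same on both sides but the normal-direction data (second fundamental form, normal components of $K$) are \emph{not} matched, so the cross terms between the matched $h_{AB}$ and the unmatched quantities organise into these squared norms; the coefficient $\frac{1}{16\pi}$, resp.\ $\frac{1}{8\pi}$, tracks the $16\pi$ in \eqref{eq-RTHam} and the factors of $2$ in \eqref{eq-DHfull}.

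The step I expect to be the main obstacle is the careful bookkeeping that turns the raw boundary integrand of \eqref{eq-DHfull}, after substituting the flow variations, into the clean squared-difference form of \eqref{eq-evoformfull}. Two subtleties must be handled with care. First, the Hamiltonian variation formula \eqref{eq-DHfull} was derived treating $(g,K) = (g_t,K_t)$ as the \emph{extension} data, so the $\Pi$ and $K_{\nu\nu}$ appearing there are $(S)$-quantities; to produce the differences one must add and subtract the $(M)$-quantities and then recognise that the ``extra'' $(M)$-pieces recombine, via the $M$-side Gauss--Codazzi relations, into exactly $\rho$, $J_A$, $J_n$ plus the remaining halves of the squared norms — this is where the dominant-energy-condition-friendly structure emerges. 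Second, one must confirm that the boundary terms ``at infinity'' genuinely cancel against $D(\xi_\infty \cdot \mathbb{P})$, i.e.\ that differentiating the normalised mass functional along the curve $t \mapsto (g_t, K_t, \xi_t)$ really does leave only the $\S_t$-boundary contribution; this is the content cited from \cite{Bartnik-05} and \cite{R-T}, but it needs to be invoked in the correct normalisation. Once these are pinned down, the remaining computation is routine trace manipulation and integration by parts on the closed surface $\S_t$, and assembling the three groups of terms yields \eqref{eq-evoformfull}.
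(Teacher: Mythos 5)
Your overall framework is the same as the paper's: write $16\pi\,\m_B(\S_t)=\H(g_t,K_t;\xi_t)$ using vacuum plus the normalisation of $\xi_t$, observe that the explicit $\xi$-variation and the bulk integral both drop out, and then evaluate the boundary expression \eqref{eq-DHfull} on $(h,L)=(\dot g_t,\dot K_t)$, whose boundary traces are fixed by the matching of the geometric boundary data with the flow in $(M,g,K)$. That much is correct and is exactly how the paper proceeds.

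There is, however, one concrete missing ingredient without which the plan as described cannot be completed. After substituting $DH_g[h]=-\Delta_{\S_t}\eta-\eta(|\Pi^{(M)}|^2+\Ric^{(M)}_{nn})$ and integrating by parts, the $N$-terms of \eqref{eq-DHfull} produce $\int_{\S_t}2\eta(\Delta_{\S_t}N+H\,\nu^i\nabla_iN)\,d\mu_t=\int_{\S_t}2\eta\,(g^{ij}-\nu^i\nu^j)\nabla^2_{ij}N\,d\mu_t$, i.e.\ the tangential Hessian of the Killing lapse. This is interior data of the extension: it is not controlled by the $M$-side Gauss--Codazzi relations or constraints, which is the only machinery your outline invokes for the final simplification. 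The paper eliminates it via the KID equations \eqref{eq-stateq1}--\eqref{eq-stateq2} on $(M_t,g_t,K_t)$ — stationarity is used pointwise at the boundary, not only to kill the bulk term. The same equation $2NK_{ij}+\L_Xg_{ij}=0$ is also what permits trading terms such as $N\,K^{(S)}_\S\cdot K^{(M)}_\S$ for $\nabla X$ terms so that the $N$-group and the $X$-group recombine. Finally, the constraints must be applied on \emph{both} sides: the vacuum Hamiltonian and momentum constraints of the extension supply the $|K^{(S)}_\S|^2$ and $\nabla^AK^{(S)}_{A\nu}$ pieces needed to complete the squares $|K^{(M)}_\S-K^{(S)}_\S|^2$ and the cross term $(K^{(M)}_\S-K^{(S)}_\S)\cdot(\Pi^{(M)}-\Pi^{(S)})$, whereas your outline only invokes the $M$-side constraints (which produce $\rho$, $J_A$, $J_n$). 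Without these two extension-side inputs the computation stalls at the $\nabla^2N$ term and the squared-difference structure of \eqref{eq-evoformfull} does not emerge.
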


\begin{proof}
Throughout this proof, we use the superscripts ${(S)}$ and ${(M)}$ as described in the statement of the theorem, except for when referencing covariant derivatives. Throughout the computation, covariant derivatives always correspond to the quantities on which they are acting (or it does not matter which of the two connections is used). For example, $\nabla K^{(M)}$ refers to a covariant derivative on $M$.
	
Since we assume each $(M_t,g^S_t,K^S_t)$ is vacuum ($\Phi_\mu(g_t,K_t)=0$), we have
\bee 
\H(g_t,K_t;\xi_t)=16\pi \xi_{t\,\infty}\cdot\mathbb{P}(g_t,K_t)=16\pi\m_B(\S_t),
\eee  
for each $t$. From this we are able to differentiate with respect to $t$ to obtain

\bee 
16\pi \frac{d}{dt}\m_B(\S_t)=D\H_{(g_t,K_t;\xi_t)}[h_t,L_t,f_t],
\eee 
where $h_t=\frac{d}{dt}g_t, L_t=\frac{d}{dt}K_t,f_t=\frac{d}{dt}\xi_t$. We are able to use this to directly compute the variation of the Bartnik mass.

Since the asymptotic value of $\xi$ depends only on $g_t,K_t$ (via $\mathbb{P}$), and since $\Phi(g_t,K_t)=0$, the linearization of $\mathcal{H}$ with respect to $\xi$ vanishes. We therefore simply write
\be \label{eq-dmdh}
16\pi \frac{d}{dt}\m_B(\S_t)=D\H_{(g_t,K_t;\xi_t)}[h_t,L_t].
\ee

The formula for the evolution of mass then follows from the computations in the preceding section, with a bit of extra work. Since each $(M_t,g_t,K_t)$ is stationary, the bulk term in \eqref{eq-DHfull} vanishes, leaving only boundary terms. In what follows, we will omit reference to the parameter $t$ when it is clear from context what we mean. As in the preceding section, we will analyze the remaining terms in \eqref{eq-DHfull} in groups. We begin with the terms containing $N$.

For this, we make use of the fact that we have
\bee 
h=\frac{d}{dt}g=2\eta \Pi^{(M)}
\eee 
and the well-known expression for the evolution of the mean curvature (see \cite[Theorem 3.2]{H-P99}):
\bee 
DH_g[h]=\frac{d}{dt}H=-\Delta_{\S_t}\eta-\eta\lf( |\Pi^{(M)}|^2+\Ric_{ij}^{(M)}n^in^j \ri).
\eee

After integrating by parts, the terms involving $N$ in \eqref{eq-DHfull} can be expressed as
\begin{align}
&\int_{\S_t}2\eta \lf(\Delta_{\S_t}N+H^{(M)}\nu^i\nabla_i N +N\Ric^{(M)}_{ij}n^in^j \ri)\,d\mu\nonumber\\
&+\int_{\S_t}2\eta N\lf(|\Pi^{(M)}|^2-\Pi^{(M)\,AB}\Pi^{(S)}_{AB} \ri)\, d\mu.\label{eq-HevotermsN1}
\end{align}
Note that, by the geometric boundary conditions we have $H^{(M)}=H^{(S)}$ so we simply write $H$. We also omit reference to $M$ and $M_t$ for other quantities that are the same on both by the boundary conditions.

In order to proceed further with the term $\Delta_{\S_t}N+H\nu^i\nabla_i N$, we make use of the identity
\be \label{eq-DeltaSigma}
\Delta_{\S_t}N+H\nu^i\nabla_i N=(g^{ij}-\nu^i\nu^j)\nabla^2_{ij}N
\ee 
combined with the fact that $(M_t,g_t,K_t)$ is stationary. In particular, we make use of the Killing initial data (KID) equations:

\be \label{eq-stateq1}
N\lf(2K_{ik}K^k_j-\Ric_{ij}-K^k_k K_{ij}  \ri)+\L_X K_{ij}+\nabla^2_{ij}N=0
\ee 
and
\be \label{eq-stateq2}
2NK_{ij}+\L_Xg_{ij}=0.
\ee 
Momentarily, we are suppressing the superscript $(S)$, however the following computation is to be understood as entirely on $(M_t,g_t,K_t)$.

We first compute
\begin{align*}
(g^{ij}-\nu^i\nu^j)\L_X K_{ij}=&\,g^{AB}\lf( X^k\nabla_kK_{AB}+2K_{kA}\nabla_BX^k  \ri)\\
=&\,g^{AB}\lf(X^C\nabla_CK_{AB}+X^\nu\nabla_\nu K_{AB}\ri)\\
&+2\lf(K^{AB}\nabla_AX_B+K^{\nu A}\nabla_A X_\nu \ri)\\
=&\,g^{AB}\lf(X^C\nabla_CK_{AB}+X^\nu\nabla_\nu K_{AB}\ri)\\
&-2\lf(N|K_\S|^2-K^{\nu A}\nabla_A X_\nu \ri),
\end{align*} 
where the last equality follows from \eqref{eq-stateq2}. We now turn to compute
\begin{align*}
(g^{ij}-\nu^i\nu^j)&\lf(2K_{ik}K^k_j-\Ric_{ij}-K^k_k K_{ij}  \ri)=\\
=& \, 2|K|^2-R-(\tr_gK)^2-2K_{\nu k}K^k_\nu +\Ric_{\nu\nu}+\tr_g(K)K_{\nu\nu}\\
=&\, 2(|K_\S|^2+2|\omega^\perp|^2+K_{\nu\nu}^2)-R-((\tr_\S K)^2 + 2\tr_\S (K) K_{\nu\nu }  
 +K_{\nu\nu}^2)\\
&-2(|\omega^\perp|^2+K_{\nu\nu}^2)+\Ric_{\nu\nu}+(\tr_\S (K)K_{\nu\nu}+K_{\nu\nu}^2)\\
=&\, 2|K_\S|^2+2|\omega^\perp|^2-R-(\tr_\S K)^2-\tr_\S(K)K_{\nu\nu}+\Ric_{\nu\nu}.
\end{align*}

Now combining these expressions with \eqref{eq-DeltaSigma}, we are able to deal with the term $\Delta_{\S_t}N+H\nu^i\nabla_i N$ appearing in \eqref{eq-HevotermsN1}. In particular, \eqref{eq-HevotermsN1} can be written as
\begin{align}\nonumber
\int_{\S_t}2\eta &N\lf( R^{(S)} -2|\omega^\perp|^2+(\tr_\S K)^2+\tr_\S (K)K^{(S)}_{\nu\nu}+\Ric^{(M)}_{nn}-\Ric^{(S)}_{\nu\nu}\ri)\, d\mu_t\\
\begin{split}
&+\int_{\S_t}2\eta N\lf( |\Pi^{(M)}|^2-\Pi^{(M)\,ij}\Pi^{(S)}_{ij} \ri)\ d\mu_t\label{eq-Nterms2}\\
&-\int_{\S_t}2\eta \lf( 2\omega^{\perp\,A}\nabla_A(X_\nu)+g^{AB}\lf(X^C\nabla_CK^{(S)}_{AB}+X^\nu\nabla_\nu K^{(S)}_{AB} \ri)\ri)\, d\mu_t.\end{split}
\end{align}
For now, we continue to focus on the terms containing $N$ and therefore we will consider only the first two lines in the above expression for now. We will return to the remaining terms containing $X$ later. In order to proceed, the Gauss equation will be required both on $M$ and $M_t$. We have
\begin{align*}
K(\S_t)&=R^{(M)}-2\Ric^{(M)}_{nn}+H^2-|\Pi^{(M)}|^2\\
K(\S_t)&=R^{(S)}-2\Ric^{(S)}_{\nu\nu}+H^2-|\Pi^{(S)}|^2,
\end{align*}
which gives
\bee 
\Ric^{(M)}_{nn}-\Ric^{(S)}_{\nu\nu}=\frac12\lf(R^{(M)}-R^{(S)}+|\Pi^{(S)}|^2-|\Pi^{(M)}|^2 \ri).
\eee 
Substituting this into the first two lines of \eqref{eq-Nterms2} gives
\begin{align}
&\int_{\S_t}2\eta N\lf( R^{(S)} -2|\omega^\perp|^2+(\tr_\S K)^2+\tr_\S (K)K^{(S)}_{\nu\nu}\ri)\, d\mu_t
\nonumber\\
&\label{eq-Nterms3}+\int_{\S_t}2\eta N\lf( |\Pi^{(M)}|^2-\Pi^{(M)\,ij}\Pi^{(S)}_{ij}\ri)d\mu_t\\ &+\int_{\S_t}2\eta N\lf (\frac12\lf(R^{(M)}-R^{(S)}+|\Pi^{(S)}|^2-|\Pi^{(M)}|^2 \ri)\ri)\ d\mu_t\nonumber.
\end{align}
Making use of the Hamiltonian constraint \eqref{eq-constraints} on $(g,K)$, we can write this as
\begin{align*}
&\int_{\S_t}2\eta N\lf(8\pi\rho+\frac12|K_\S^{(M)}|^2 +\frac12(\tr_\S K)^2+\tr_\S(K)(K^{(S)}_{\nu\nu}-K_{nn}^{(M)}  \ri)\,d\mu_t\\
&+\int_{\S_t}\eta N\lf( |\Pi^{(M)}-\Pi^{(S)} |^2+R^{(S)}-2|\omega^\perp|^2\ri).
\end{align*}
Next note that the Hamiltonian constraint on $(g_t,K_t)$, which is vacuum, then gives
\begin{align*}
&\int_{\S_t}2\eta N\lf(8\pi\rho+\frac12|K_\S^{(M)}|^2 +\frac12(\tr_\S K)^2+\tr_\S(K)(K^{(S)}_{\nu\nu}-K_{nn}^{(M)})  \ri)\,d\mu_t\\
&+\int_{\S_t}\eta N\lf( |\Pi^{(M)}-\Pi^{(S)} |^2+(|K^{(S)}_\S|^2-(\tr_\S K)^2-2K_{\nu\nu}^{(S)}\tr_\S(K))\ri)\\
=&\int_{\S_t}\eta N\lf(16\pi\rho+|K_\S^{(M)}-K_\S^{(S)}|^2 +2K_{\S}^{(S)}\cdot K_{\S}^{(M)}-2\tr_\S(K)K_{nn}^{(M)}  \ri)\,d\mu_t\\
&+\int_{\S_t}\eta N\lf(|\Pi^{(M)}-\Pi^{(S)} |^2\ri)\,d\mu_t.
\end{align*}
Recall that this expression is the contribution to $\frac{d}{dt}\m_B(\S_t)$ depending on $N$, so at first glance it may appear to be inconsistent with \eqref{eq-evoformfull}. However, recall that via \eqref{eq-stateq2} we are able to exchange terms containing $N$ and $K$ with terms containing $X$. In particular, we have
\be\label{eq-Statswitch1} 
NK^{(S)}_\S \cdot K^{(M)}_\S = - \nabla^A X^B K^{(M)}_{AB}
\ee 
and
\be \label{eq-Statswitch2}
N\tr_\S (K)=- g^{AB}\nabla_A X_B.
\ee

That is, after making these substitutions, all of the remaining terms containing $N$ agree with those in \eqref{eq-evoformfull}. Unfortunately, we have traded some undesirable terms for a different kind of undesirable term -- we have terms of the form $\nabla X$ to deal with.

We next would like to simplify the terms in \eqref{eq-DHfull} containing $X$. Before doing that, we take a moment to examine the full expression for the derivative of the Bartnik mass after the simplifications made so far:
\begin{align*}
16\pi &\frac{d}{dt}\m_B(\S_t)= \int_{\S_t}\eta N\lf(16\pi\rho+ |\Pi^{(M)}-\Pi^{(S)}|^2+|K_\S^{(M)}-K_\S^{(S)}|^2 \ri)\, d\mu_t \\
&+\int_{\S_t}X^\nu\lf( -2D(\tr_\S K)_{(g,K)}[h,L]-K_\S\cdot h_\S +\tr_\S(h)K_{\nu\nu} \ri)\,d\mu_t\\
&+\int_{\S_t}X^A\lf(2D(\omega^\perp_A)_g[h,L]+\tr_\S(h)\omega^\perp_A\ri)\,d\mu_t\\
&-\int_{\S_t}2\eta\lf( \nabla^AX^BK^{(M)}_{AB}-K^{(M)}_{nn}g^{AB}\nabla_A X_B \ri)\,d\mu_t,\\
&-\int_{\S_t}2\eta \lf( 2\omega^{\perp\,A}\nabla_A(X_\nu)+g^{AB}\lf(X^C\nabla_CK^{(S)}_{AB}+X^\nu\nabla_\nu K^{(S)}_{AB} \ri)\ri)\, d\mu_t,
\end{align*}
where the last line comes from the $X$-terms we dropped from \eqref{eq-Nterms2} and the second last line comes from \eqref{eq-Statswitch1} and \eqref{eq-Statswitch2}. We note that in the second and third lines in the above expression, we are yet to make use of the particular form of of the perturbations $h$ and $L$. 

We focus on the third line, noting that $\tr_\S(h)=2\eta H$, we simply must determine how $\omega^\perp_A=K_{Ai}n^i$ varies along the evolution.

In order to proceed, we consider a point in $M$ where the speed $\eta$ does not vanish. Around this point, the metric $g$ can be expressed in local coordinates as
\bee 
	g=\eta^2dt^2+g_{AB}dx^Adx^B.
\eee 
In determining the evolution of $\omega^\perp$, we are working entirely in the manifold $(M,g)$ so we drop the superscripts $(M)$ for the sake of notational brevity. Now, recall that $D(\omega^\perp_A)_g[h]=\frac{\p}{\p t}\omega^\perp_A$, which we compute as
\begin{align*} 
	\frac{\p}{\p t}\omega^\perp_A&=\frac{\p}{\p t}( \eta^{-1}K_{At} )\\
	&=-\eta^{-2}\eta_{,t}K_{At}+\eta^{-1}K_{At,t}\\
	&=-\nabla_n (\eta)K_{An}+\eta^{-1}\lf(\nabla_t K_{At}+K_{iA}\Gamma^i_{tt}+K_{ti}\Gamma^i_{At}\ri).
\end{align*} 
Now, 
\bee 
	\Gamma^i_{tt}\p_i=\nabla_t(\p_t)=\eta \nabla_n(\eta n)=\eta\nabla_n(\eta)n+\eta^2\nabla_n n,
\eee 
where $\nabla_n n$ can be computed by exploiting the fact that $\nabla_n n$ is tangent to $\S$. We compute
\begin{align*}
	\left<\nabla_n n,\p_A\right>&=-\left<n,\nabla_n\p_A\right>\\
	&=-\left<n,\eta^{-1}\nabla_A\p_t\right>\\
	&=-\left<n,\eta^{-1}\nabla_A(\eta n)\right>\\
	&=-\eta^{-1}\nabla_A \eta.
\end{align*}
We therefore have
\bee 
	\Gamma^{i}_{tt}\p_i=\eta\nabla_n(\eta)n-\eta\nabla_\S \eta;
\eee 
that is,
\bee 
	\Gamma^n_{tt}=\eta\nabla_n(\eta)\qquad \text{and}\qquad \Gamma^A_{tt}=-\eta\nabla^A\eta.
\eee
This gives us
\bee 
	K_{iA}\Gamma^i_{tt}=\eta K_{nA}\nabla_n(\eta )-\eta K_{A}^B\nabla_B \eta.
\eee 
Similarly we have
\bee 
	K_{ti}\Gamma^i_{At}=\eta \nabla_A \eta K_{nn}+\eta^2 K_{n B}\Pi^{B}_A,
\eee 

which allows us to write

\bee 
	\frac{\p}{\p t}\omega^\perp_A=\eta\lf( \nabla_n K_{An}+K_{Bn}\Pi^B_A \ri)-\nabla^B(\eta)K_{AB}+\nabla_A (\eta)K_{nn}.
\eee 

While we computed this for points where $\eta$ does not vanish, it is clear by continuity that this expression is valid everywhere on $\S_t$.

Finally, we turn to compute the evolution of $\tr_\S K$, 
\bee 
	\frac{\p}{\p t} \tr_\S(K)=-2\eta\Pi^{AB}K_{AB}+g^{AB}\p_tK_{AB}.
\eee 

Similar to above, we compute
\begin{align*}
	\p_t K_{AB}&=\nabla_t K_{AB}+K_{iB}\Gamma^i_{At}+K_{iA}\Gamma^i_{Bt}\\
	&=\eta\nabla_n K_{AB}+K_{nB}\nabla_A\eta+K_{nA}\nabla_B\eta+\eta\lf( K_{CB}\Pi^C_A+K_{CA}\Pi^C_B \ri),
\end{align*}
which gives
\bee 
	\frac{\p}{\p t} \tr_\S(K)=\eta\nabla_n(\tr_\S K) +2K_{n}^A\nabla_A\eta.
\eee 

We are now able to interpret each of the terms in \eqref{eq-DHfull} in terms of the evolving surfaces, rather than $h$ and $L$. However, the expression we have for the evolution of quasi-local mass still looks quite far from \eqref{eq-evoformfull}. We collect all of the terms once more, to see what remains:
\begin{align}\nonumber
16\pi \frac{d}{dt}\m_B(\S_t)=\,& \int_{\S_t}\eta N\lf(16\pi\rho+ |\Pi^{(M)}-\Pi^{(S)}|^2+|K_\S^{(M)}-K_\S^{(S)}|^2 \ri)\, d\mu_t  \\
\begin{split} \label{eq-alltermsinterpreted}
&+\int_{\S_t}X^\nu\lf( -2\eta\nabla_n(\tr_\S K^{(M)}) -4K_n^{(M)\,A}\nabla_A\eta\ri)\,d\mu_t\\
& +\int_{\S_t} X^\nu\lf(2\eta HK^{(S)}_{\nu\nu}-2\eta K^{(S)}_\S\cdot \Pi^{(M)}_\S \ri)\,d\mu_t\\
&+\int_{\S_t}2\eta X^A\lf( \nabla_n K^{(M)}_{An}+\omega^\perp_B\Pi^{(M)\,B}_A+H\omega^\perp_A  \ri)\,d\mu_t\\
&+\int_{\S_t}2X^A\lf( \nabla_A (\eta)K^{(M)}_{nn}-\nabla_\S^B(\eta)K^{(M)}_{AB} \ri)\, d\mu_t\\
&-\int_{\S_t}2\eta\lf( \nabla^AX^BK^{(M)}_{AB}-K^{(M)}_{nn}g^{AB}\nabla_A X_B \ri)\,d\mu_t\\
&-\int_{\S_t}2\eta \lf(+g^{AB}\lf(X^C\nabla_CK^{(S)}_{AB}+X^\nu\nabla_\nu K^{(S)}_{AB} \ri)\ri)\, d\mu_t\\
&-\int_{\S_t}4\eta\omega^{\perp\, A}\nabla_A(X_\nu)\,d\mu_t.\end{split}
\end{align}
Note here that we write $\nabla_\S$ (or $\nabla^\S$) to denote the Levi-Civita connection on $(\S,g_\S)$.

Before we continue and examine the $X^\nu$ terms, it will be useful to first group some terms. First we integrate by parts, the terms in the fifth line of \eqref{eq-alltermsinterpreted}. We obtain
\begin{align*}
	\int_{\S_t}2X^A &\lf( \nabla_A(\eta)K_{nn}^{(M)}-K_{AB}^{(M)} \nabla^B \eta\ri)\,d\mu_t\\
	&=\int_{\S_t}2\eta \lf(\nabla_B^\S(K^{(M)\,B}_AX^A) -\nabla^\S_A(X^AK_{nn}^{(M)}) \ri)\,d\mu_t\\
	&=\int_{\S_t}2\eta X^A\lf( \nabla^\S_B(K^{(M)\,B}_A)-\nabla^\S_A(K_{nn}^{(M)} )\ri)\,d\mu_t\\
	&+\int_{\S_t}2\eta \lf(K^{(M)}_{AB}\nabla^B_\S X^A-K_{nn}^{(M)}\nabla^\S_A X^A \ri)\,d\mu_t,
\end{align*}
of which the first integrand will be grouped with the other $X^A$ terms, and the remaining integrand is very closely related to the sixth line of \eqref{eq-alltermsinterpreted}. In particular, we make use of the fact
\bee 
	\nabla^\S_A X_B=\nabla_AX_B-X^\nu \Pi_{AB},
\eee 
to see that the aforementioned terms almost cancel. Making use of this, we can rewrite the expression for the evolution of quasi-local mass as

\begin{align*}
16\pi \frac{d}{dt}\m_B(\S_t)=&\, \int_{\S_t}\eta N\lf(16\pi\rho+ |\Pi^{(M)}-\Pi^{(S)}|^2+|K_\S^{(M)}-K_\S^{(S)}|^2 \ri)\, d\mu_t \\
&+\int_{\S_t}2\eta X^\nu\lf( -\nabla_n(\tr_\S K^{(M)}) - K^{(S)}_\S\cdot \Pi^{(M)}_\S + HK^{(S)}_{\nu\nu} \ri)\,d\mu_t\\
&-\int_{\S_t}2\eta X^\nu\lf(  \Pi^{(S)}\cdot  K^{(M)}_\S   -K_{nn}^{(M)}H \ri)-4X^\nu K_n^{(M)\,A}\nabla_A\eta\,d\mu_t\\
&+\int_{\S_t}2\eta X^A\lf(\lf( \nabla_n K^{(M)}_{An}+\omega^\perp_B\Pi^{(M)\,B}_A \ri)+H\omega^\perp_A\ri)\,d\mu_t\\
&+\int_{\S_t}2\eta X^A\lf( \nabla_\S^B(K^{(M)}_{AB})-\nabla^\S_A (K^{(M)}_{nn})+g^{BC}\nabla_AK_{BC}^{(S)} \ri)\,d\mu_t\\
&-\int_{\S_t}2\eta \lf( 2\omega^{\perp\,A}\nabla_A(X_\nu)+g^{AB}X^\nu\nabla_\nu K^{(S)}_{AB} \ri)\, d\mu_t.
\end{align*}
We would now like to collect all of the terms containing $X^\nu$; that is, the second and third lines in the above expression, as well as the $X^\nu$ terms in the last line. We begin by noting that we should integrate the final term in the third line by parts to obtain
\be 
	4\eta \lf(X^\nu\nabla_A^\S K^{(M)\,A}_n+\omega^{\perp\, A}\nabla^\S_A X^\nu\ri),
\ee 
which can then be written as
\be \begin{split}\label{eq-452}
	4\eta& \left( X^\nu\lf(\nabla_A(K_n^{(M)\,A})-HK_{nn}^{(M)}+K^{(M)}_\S\cdot \Pi^{(M)}\ri)\right.\\&\left.+\omega^{\perp\,A}\nabla_A X^\nu+\omega^{\perp\,A}\Pi^{(S)}_{AB}X^B \right).
	\end{split}
\ee 
Now, the last term in \eqref{eq-452} will be grouped with the $X^A$ terms, and we bring together all of the $X^\nu$ terms now. After factoring out $2\eta X^\nu$, we obtain
\begin{align*}
	-\nabla_n(\tr_\S K^{(M)}) - K^{(S)}_\S\cdot \Pi^{(M)}_\S + HK^{(S)}_{\nu\nu}-\Pi^{(S)}\cdot K^{(M)}_\S +K_{nn}^{(M)}H\\
	+2\nabla_A(K_n^{(M)\,A})-2HK_{nn}^{(M)}+2K^{(M)}_\S\cdot \Pi^{(M)}-g^{AB}\nabla_\nu K_{AB}^{(S)}.
\end{align*}
We can then simplify this using the momentum constraint applied to both $M$ and $M_t$,
\begin{align*} 
	8\pi J_n&=\nabla^A(K_{An}^{(M)})-\nabla_n(\tr_\S K^{(M)})\\
	0&=\nabla^A(K_{A\nu}^{(S)})-\nabla_\nu(\tr_\S K^{(S)}).
\end{align*} 
The $X^\nu$ terms now become
\begin{align*}
8\pi J_n - K^{(S)}_\S\cdot \Pi^{(M)} + H(K^{(S)}_{\nu\nu}-K_{nn}^{(M)})-\Pi^{(S)}\cdot K^{(M)}_\S\\
+\nabla_A(K_n^{(M)\,A})+2K^{(M)}_\S\cdot \Pi^{(M)}-\nabla^A K_{A\nu}^{(S)}.
\end{align*}
We now make use of the fact
\bee 
	\nabla_A K_{Bn}=\nabla^\S_A K_{Bn}+K_{nn}\Pi_{AB}-K_{BC}\Pi^C_A,
\eee 
for both $M$ and $M_t$, to obtain
\begin{align*}
8\pi J_n - K^{(S)}_\S\cdot \Pi^{(M)} -\Pi^{(S)}\cdot K^{(M)}_\S+\nabla^\S_A(K_n^{(M)\,A}-K_\nu^{(S)\,A})\\
+K^{(M)}_\S\cdot \Pi^{(M)}+K^{(S)}_\S\cdot\Pi^{(S)}.
\end{align*}
Finally, making use of $\omega^\perp_A=K^{(M)}_{nA}=K^{(S)}_{\nu A}$, we obtain
\bee 
	8\pi J_n+(K^{(M)}_\S-K^{(S)}_\S)\cdot(\Pi^{(M)}-\Pi^{(S)}).
\eee
That is, collecting all of the terms in our evolution equation expression once more, we have
\begin{align*}
16\pi \frac{d}{dt}\m_B(\S_t)=&\, \int_{\S_t}\eta N\lf(16\pi\rho+ |\Pi^{(M)}-\Pi^{(S)}|^2+|K_\S^{(M)}-K_\S^{(S)}|^2 \ri)\, d\mu_t \\
&+\int_{\S_t}2\eta X^\nu\lf( 8\pi J_n+(K^{(M)}_\S-K^{(S)}_\S)\cdot(\Pi^{(M)}-\Pi^{(S)}) \ri)\,d\mu_t\\
&+\int_{\S_t}2\eta X^A\lf( \nabla_n K^{(M)}_{An}+\omega^\perp_B\Pi^{(M)\,B}_A \ri)\,d\mu_t\\
&+\int_{\S_t}2\eta X^A\lf(\nabla_\S^B(K^{(M)}_{AB})-\nabla^\S_A (K^{(M)}_{nn})\ri)\,d\mu_t\\
&+\int_{\S_t}2\eta X^A\lf(H\omega^\perp_A -g^{BC}\nabla_A( K^{(S)}_{BC})+2\omega^{\perp\,B}\Pi^{(S)}_{AB}\ri)\, d\mu_t.
\end{align*}
Recall, it is our hope to interpret the coefficients of $X^A$ in terms of $J_A$, so we expand the momentum constraint as
\be \label{eq-momconstexp}
	8\pi J_A=\nabla_\S^B K^{(M)}_{AB}+\omega^\perp_B \Pi^{(M)\,B}_A+\omega^\perp_A H+\nabla_n K^{(M)}_{An}-\nabla_A(\tr_\S K^{(M)})-\nabla^\S_AK_{nn}^{M}.
\ee 
Remarkably, all of our $X^A$ terms now can simply be written as
\bee 
	8\pi J_A+\lf( \nabla_A(\tr_\S K)-g^{BC}\nabla_A K_{BC}^{(S)}+2\omega^{\perp\,B}\Pi^{(S)}_{AB} \ri).
\eee 
However, we have
\bee 
	g^{BC}\nabla_A K^{(S)}_{BC}=g^{BC}\lf( \nabla^\S_A K_{BC}^{(S)}+K_{\nu C}^{(S)}\Pi^{(S)}_{AB}+K_{\nu B}^{(S)}\Pi^{(S)}_{AC} \ri)
\eee 
so this simply reduces to $8\pi J_A$. The only remaining thing to note is that the formula we have derived depends on $X^\nu$\\

Therefore, we conclude
\begin{align*}
16\pi \frac{d}{dt}\m_B(\S_t)=&\, \int_{\S_t}\eta N\lf( |\Pi^{(M)}-\Pi^{(S)}|^2+|K_\S^{(M)}-K_\S^{(S)}|^2 \ri)\, d\mu_t \\
&+\int_{\S_t}     2\eta X^\nu  (K^{(M)}-K^{(S)})\cdot(\Pi^{(M)}-\Pi^{(S)})  \,d\mu_t\\
&+\int_{\S_t}  \eta  16\pi  \lf(  N \rho +   X^\nu J_n +  X^A J_A \ri)  \, d\mu_t.
\end{align*}
This completes the proof of \eqref{eq-evoformfull}.
\end{proof}

It is clear from the above derivation that, in the context of Theorem \ref{thm-main}, 
if one only assumes there exists a smooth $1$-parameter family of stationary, vacuum, asymptotically flat manifolds $\{ 
 (M_t,g_t,K_t) \}$ whose boundary data agrees with $\Sigma_t $ in $(M, g, K)$ for each $t$, then
\be \label{eq-dmass-stationary}
\begin{split}
& \,  \frac{d}{dt}\m_{ADM} (M_t, g_t, K_t) \\
= &\,  \frac{1}{16\pi} \int_{\S_t}\eta N\lf( |\Pi^{(M)}-\Pi^{(S)}|^2+|K_\S^{(M)}-K_\S^{(S)}|^2 \ri)\, d\mu_t \\
&+   \frac{1}{8 \pi} \int_{\S_t}     \eta X^\nu  (K^{(M)}-K^{(S)})\cdot(\Pi^{(M)}-\Pi^{(S)})  \,d\mu_t\\
&+\int_{\S_t}  \eta    \lf(  N \rho +   X^\nu J_n +  X^A J_A \ri)  \, d\mu_t.
\end{split}
\ee
We would like to bring readers' attention to the recent work of Z. An \cite{An-18}, in which the author proves that, for data $(g,H,\omega^\perp,\tr_gK)$ near the standard data of a round sphere in a time-symmetric slice of the Minkowski spacetime $ \R^{3,1}$, there exists a (locally unique) stationary vacuum extension $(M^S, g^S, K^S)$ that
depends smoothly on the boundary data.
As a result, formula \eqref{eq-dmass-stationary} is applicable to such stationary extensions produced by Z. An in \cite{An-18}.
 
\medskip 
 
\section*{Acknowledgements}
Robert Bartnik is an inspiration, a friend and a mentor to both of us, and it is truly a pleasure to dedicate this article to him on the occasion of his 60th birthday.


\end{document}